\newtheorem{definition}{\bf Definition}
\newtheorem{example}{\bf Example}
\newtheorem{lemma}{\bf Lemma}
\newtheorem{theorem}{\bf Theorem}
\newtheorem{remark}{\bf Remark}
\newtheorem{corollary}{\bf Corollary}
\begin{document}

\title[Levi graphs of point-line configurations]{On the Levi graph of point-line configurations}
\author{Jessica Hauschild}
\address{Kansas Wesleyan University. Salina, KS.}
\email{jessica.hauschild@kwu.edu}
\author{Jazmin Ortiz}
\address{Harvey Mudd College. Claremont, CA.}
\email{jortiz@g.hmc.edu}
\author{Oscar Vega}
\address{California State University, Fresno. Fresno, CA.}
\email{ovega@csufresno.edu}

\subjclass[2010]{Primary 05B30; Secondary 51E05, 51E30}
\keywords{Levi graph, maximal independent sets, configurations}
\thanks{We gratefully acknowledge the NSF for their financial support (Grant \#DMS-1156273), and the REU program at California State University, Fresno.}

\begin{abstract}
We prove that the well-covered dimension of the Levi graph of a point-line configuration $(v_r,b_k)$ is equal to $0$, whenever  $r>2$.
\end{abstract}

\maketitle

\section{Introduction}

The concept of the well-covered space of a graph was first introduced in 1998 and 1999 by Caro, Ellingham, Ramey, and Yuster (see \cite{CER} and \cite{CY}) as an effort to  generalize the study of well-covered graphs. Brown and Nowakowski \cite{BN},  in 2005, continued the study of this object and, among other things,  provided several examples of graphs featuring odd behaviors regarding their well-covered space. One of these special situations occurs when the well-covered space of the graph is trivial, i.e. when the graph is \emph{anti-well-covered}. In this work, we prove that almost all Levi graphs of configurations $(v_r, b_k)$ are anti-well-covered.

We start our exposition by providing the following definitions and previously-known results. Any introductory concepts failed to be defined here may be found in the books by Bondy \& Murty \cite{BM} and Gr\"unbaum \cite{G}.

We consider only simple and undirected graphs. A graph will be denoted as $G=(V(G), E(G))$, as is customary, where $V(G)$ is the set of vertices of the graph and $E(G)$ is the set of edges of the graph. Two vertices of a graph are said to be \textit{adjacent} if they are connected by an edge. An \textit{independent} set of vertices is one in which no two vertices in the set are adjacent. If an independent set, $M$, of a graph $G$, is not a proper subset of any other independent set of $G$, then $M$ is a \textit{maximal independent set} of $G$.

\begin{definition}
Let $G$ be a graph and $\mathbf F$ a field.
\begin{enumerate}
\item A function $f:V(G) \rightarrow \mathbf F$ is said to be a \textit{weighting} of $G$. If the sum of all weights is constant for all maximal independents sets of $G$, then the weighting is a \textit{well-covered weighting} of $G$. 
\item The $\mathbf F$-vector space consisting of all well-covered weightings of $G$ is called the well-covered space of $G$ (relative to $\mathbf F$). 
\item The dimension of this vector space is called the \textit{well-covered dimension} of $G$, denoted $wcdim(G, \mathbf F)$. 
\end{enumerate}
\end{definition}

\begin{remark}
For some graphs, the characteristic of the field $\mathbf F$ makes a difference when calculating the well-covered dimension (see \cite{BKMUV} and \cite{BN}). 
If $char(\mathbf F)$ does not cause a change in the well-covered dimension, then the well-covered dimension is denoted as $wcdim(G)$. 
\end{remark}

In order to calculate the well-covered dimension of a graph, $G$, one would generally need to find all possible maximal independent sets of $G$. However, finding all maximal independent sets is not always an easy task, as this is a known NP-complete problem.

Despite the NP-complete nature of this problem, let us assume that we have found all possible maximal independent sets of $G$. We will denote these maximal independent sets as $M_i$ for $i= 0, 1, \ldots, k-1$. The well-covered weightings of $G$ are determined by solving a system of linear equations that arise from considering all equations of the form $M_0=M_i$ for $i=1, \ldots, k-1$. When subtraction occurs in each of these equations, we get a homogeneous system. We can then form an associated matrix, $A_G$ from this homogeneous system of linear equations. It follows that the nullity of $A_G$ will be the dimension of the well-covered space of $G$. Thus, 
\[
wcdim(G, \mathbf F) = |V(G)|-rank(A_G).
\]

We now move onto another component of our work: configurations. 

\begin{definition}[Gr\"unbaum \cite{G}]\label{defconfiguration}
A configuration is a family of points and lines that satisfy these conditions.
\begin{enumerate}
\item It is a symmetric relation.
\item Incidence is only between a single point and line.
\item Two points are incident with at most one line.
\item Two lines are incident with at most one point.
\end{enumerate}
\end{definition}

Next, there is some notation for configurations that needs to be set, as well as specific parameters that need to be established for the main result of this work.

\begin{definition} \label{defconfigurationpara} 
We define a $(v_r, b_k)$ configuration as a point-line configuration such that
\begin{enumerate}
\item There are exactly $k$ points incident with each line, and $k\geq 2$.
\item There are exactly $r$ lines incident with each point, and $r\geq 2$.
\item There are exactly $v$ points in $(v_r, b_k)$, and $v\geq 4$.
\item There are exactly $b$ lines in $(v_r, b_k)$, and $b\geq 4$.
\end{enumerate}
\end{definition}

When $v=b$ and $r=k$, the configuration will be denoted by $(v_r)$.    

\begin{example}
Several well-known geometric structures fall into the category of $(v_r, b_k)$ configurations. For instance:
\begin{enumerate}
\item A projective plane of order $q$ is a $(q^2+q+1_{(q+1)})$ configuration, where $q$ is the power of a prime. See Figure \ref{fig:PG(2,3)} for a representation of $PG(2,3)=(13_4)$.
\item The Pappus configuration is a $(9_3)$ configuration, and the Desargues configuration is a $(10_3)$ configuration.
\item $PG(n,q)$ is a $\left(\displaystyle \frac{q^{n+1}-1}{q-1}_{(q+1)}, \displaystyle \frac{(q^{n+1}-1)(q^n-1)}{(q^2-1)(q-1)}_{(q^2+q+1)}\right)$ configuration, where $q$ is the power of a prime. 
\item A generalized quadrangle $G(s,t)$ is a $((1+s)(st+1)_{(1+s)}, (1+t)(st+1)_{(1+t)})$ configuration.
\end{enumerate} 
The reader is referred to the book by Batten \cite{B} for more information about these important geometric objects.
\end{example}

Finally, we define the object, Levi graphs, that will connect configurations and graphs. 

\begin{definition}
Given a configuration, $\mathcal{C}$, we define the \textit{Levi graph} of $\mathcal{C}$, denoted $Levi_{\mathcal{C}}$, as the bipartite graph with vertices given by the points and lines in $\mathcal{C}$. The edges of $Levi_{\mathcal{C}}$ connect a point-vertex $P$ with a line-vertex $\ell$ if and only if the point $P$ is incident with the line $\ell$ in $\mathcal{C}$. No other edges exist in $Levi_{\mathcal{C}}$. 
\end{definition}
\begin{figure}[H]
\centering
\raisebox{.6in}{\includegraphics[width=0.55\textwidth]{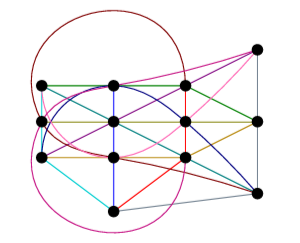}} \hspace{.1in}  \includegraphics[width=0.4\textwidth]{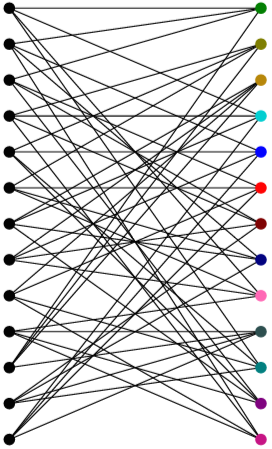}
\caption{$(13_4) = PG(2,3)$ and $Levi_{(13_4)}$}
\label{fig:PG(2,3)}
\end{figure}

Our main result, which will be proven in the following section, combines all these objects as follows:

\begin{theorem} \label{PH(rw)}
If $r\in \mathbb{N}$ and $r>2$, then $wcdim(Levi_{(v_r,b_k)})=0$.
\end{theorem}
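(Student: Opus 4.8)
The plan is to show that the homogeneous system forcing all maximal independent sets to share a common weight has only the trivial solution; since $wcdim(Levi_{(v_r,b_k)}) = |V(G)| - rank(A_G)$, this gives the result. Write a weighting as $x_P = f(P)$ for each point $P$ and $y_\ell = f(\ell)$ for each line $\ell$, and set $Y = \sum_\ell y_\ell$ for the total line-weight and $y(P) = \sum_{\ell \ni P} y_\ell$ for the weight of the $r$ lines through $P$. Rather than enumerating \emph{all} maximal independent sets (an NP-complete task, as noted above), I would use only a small, explicitly describable family of them, chosen so that their weight equations collapse the whole system.

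First I would record that the set of all lines is a maximal independent set (no point can be added, as every point lies on $r \ge 2$ of the chosen lines), and so is, for each point $P$, the set $S_P = \{P\} \cup \{\ell : P \notin \ell\}$. Maximality of $S_P$ uses the configuration axioms: a second point $Q$ lies on $r$ lines, at most one of which passes through $P$, so at least $r-1 \ge 1$ of them avoid $P$ and already belong to $S_P$. Equating the weight of $S_P$, namely $x_P + (Y - y(P))$, with the weight $Y$ of the all-lines set yields, for every point $P$, the key relation
\[
x_P = y(P) = \sum_{\ell \ni P} y_\ell,
\]
which I will call $(\ast)$.

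The heart of the argument, and the step where the hypothesis $r > 2$ is essential, is a second family of maximal independent sets. For any two distinct points $P, Q$ I would consider $T_{P,Q} = \{P, Q\} \cup \{\ell : P \notin \ell,\ Q \notin \ell\}$. This set is independent by construction, and I claim it is maximal \emph{precisely} because $r > 2$: any third point $R$ lies on $r$ lines, of which at most one passes through $P$ and at most one through $Q$, leaving at least $r - 2 \ge 1$ lines through $R$ that avoid both $P$ and $Q$ and hence lie in $T_{P,Q}$, so $R$ cannot be added. (For $r = 2$ this fails, which is exactly why the theorem excludes that case.) Equating the weight of $T_{P,Q}$ with $Y$ and simplifying the sum over lines through $P$ or $Q$ by inclusion--exclusion, the terms $y(P)$ and $y(Q)$ cancel against $x_P$ and $x_Q$ by $(\ast)$, and what survives is exactly the weight of the common line of $P$ and $Q$ (unique when it exists, by the configuration axioms), forcing that weight to be $0$ whenever $P$ and $Q$ are collinear.

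To finish, I would observe that since every line carries $k \ge 2$ points, each line is the common line of some collinear pair, so \emph{all} line-weights vanish; feeding $y_\ell = 0$ back into $(\ast)$ gives $x_P = 0$ for every point as well. Hence the only well-covered weighting is identically zero, and $wcdim(Levi_{(v_r,b_k)}) = 0$. I expect the main obstacle to be the careful verification that the sets $T_{P,Q}$ are maximal (this is where $r > 2$ enters) together with the bookkeeping in their weight equation, in particular separating the collinear and non-collinear cases for the common line; the remaining algebra is routine.
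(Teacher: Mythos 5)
Your proposal is correct and follows essentially the same route as the paper: the sets you call $S_P$ and $T_{P,Q}$ are exactly the paper's maximal independent sets $M_P$ and $M_{P_1,P_2}$, and in both arguments the weight equations against the all-lines set first force $f(\ell)=0$ for every line (using $r>2$ for maximality of the two-point sets, and $k\ge 2$ so that every line contains a collinear pair) and then $f(P)=0$ for every point. The only cosmetic differences are that you allow non-collinear pairs (which yield vacuous equations) and compare $T_{P,Q}$ directly with the all-lines set, whereas the paper compares $M_{P_1,P_2}$ with $M_{P_1}$ and phrases the conclusion as the associated matrix $\left[\begin{smallmatrix} I_v & -C \\ \mathbf{0} & I_b \end{smallmatrix}\right]$ having full rank.
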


\section{The Well-Covered Dimension of $Levi_{(v_r,b_k)}$}

We will prove Theorem \ref{PH(rw)} by first proving a technical lemma, that introduces a family of maximal independent sets that will show to be useful later on.

\begin{lemma}\label{MIS_gq}
A Levi graph of a configuration $(v_r, b_k)$, where $r > 2$, has at least $v + b + 2$ maximal independent sets. 
\end{lemma}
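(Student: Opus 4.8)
The plan is to exhibit $v+b+2$ maximal independent sets of $G := Levi_{(v_r,b_k)}$ explicitly and then verify that they are pairwise distinct. Write $\mathcal P$ for the set of point-vertices and $\mathcal L$ for the set of line-vertices, so that $|\mathcal P|=v$, $|\mathcal L|=b$, and the only edges of $G$ join a point to a line incident with it. The two easiest sets to produce are $\mathcal P$ and $\mathcal L$ themselves: each is independent because $G$ is bipartite, and each is maximal because every line meets one of its $k\ge 2$ points (so no line can be added to $\mathcal P$) while every point lies on one of its $r\ge 2$ lines (so no point can be added to $\mathcal L$). This accounts for $2$ of the required sets.

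For the remaining $v+b$, I would attach to each vertex a ``complementary fan.'' For a point $P$, set
\[
M_P = \{P\}\cup\{\ell\in\mathcal L : P \text{ is not incident with } \ell\},
\]
and dually, for a line $\ell$, set $M_\ell=\{\ell\}\cup\{Q\in\mathcal P : Q \text{ is not incident with } \ell\}$. Independence of $M_P$ is immediate: $P$ is joined only to the lines through $P$, none of which belongs to $M_P$, and the other elements are mutually non-adjacent lines. The substantive point is maximality. The vertices outside $M_P$ are the $r$ lines through $P$, each adjacent to $P\in M_P$, and the points $Q\ne P$; for such a $Q$ I must produce a line of $M_P$ incident with $Q$, i.e. a line through $Q$ that avoids $P$. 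Since two points are incident with at most one common line (axiom (3) of Definition \ref{defconfigurationpara}) while $Q$ lies on $r\ge 2$ lines, at least $r-1\ge 1$ lines through $Q$ avoid $P$, giving the required neighbor. The argument for $M_\ell$ is identical with the roles of points and lines (and the hypothesis $k\ge 2$) interchanged.

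Finally I would establish distinctness of the $v+b+2$ sets, which I expect to be the only delicate part. The sets $M_P$ are pairwise distinct because $P$ is the unique point contained in $M_P$, and likewise the $M_\ell$ are pairwise distinct; neither type can equal $\mathcal P$ or $\mathcal L$, by a cardinality comparison (each $M_P$ contains exactly one point while $\mathcal P$ contains $v\ge 4$ of them, and symmetrically for lines). The genuine obstacle is ruling out a coincidence $M_P=M_\ell$. Comparing the number of point- and line-vertices on each side forces $v-k=1$ and $b-r=1$ globally; counting incidences via $vr=bk$ then yields $r=k$, whence $v=b=k+1$ and each line omits exactly one point. In that situation any two points would lie on $v-2$ common lines, and since $v\ge 4$ this contradicts axiom (3). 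Hence no such coincidence occurs, all $v+b+2$ sets are distinct, and the count follows. I expect the maximality verification and this closing degeneracy analysis to carry all the real content; the rest is bookkeeping.
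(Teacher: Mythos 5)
Your proof is correct, but your second family of sets is genuinely different from the paper's. Where you take the dual fans $M_\ell=\{\ell\}\cup\{Q\in\mathcal P: Q\notin\ell\}$, the paper instead fixes two points $P_1,P_2$ on each line $\ell$ and takes $M_{P_1,P_2}=\{P_1,P_2\}\cup\{m\in\mathcal L: m \text{ misses both } P_1 \text{ and } P_2\}$; maximality of those sets is exactly where the hypothesis $r>2$ enters (a point outside shares at most one line with each $P_i$, so $r\geq 3$ guarantees a line through it avoiding both). The trade-off is clear. The paper's four classes of sets contain $0$, $1$, $2$, and $v\geq 4$ point-vertices respectively, so distinctness is immediate; your construction instead forces the degeneracy analysis ruling out $M_P=M_\ell$, which you carry out correctly ($v-k=b-r=1$, then $vr=bk$ gives $r=k$ and $v=b=k+1$, whence two points would lie on $v-2\geq 2$ common lines, contradicting the configuration axioms). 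What your route buys is generality: you never use $r>2$, only $r,k\geq 2$ and $v\geq 4$, so your argument proves the lemma for all $(v_r,b_k)$ configurations in the sense of Definition \ref{defconfigurationpara} --- consistently with, e.g., the quadrilateral, whose Levi graph $C_8$ has exactly $10=v+b+2$ maximal independent sets --- while the restriction $r>2$ is only genuinely needed for the theorem the lemma feeds into. What the paper's route buys is reuse: comparing $M_{P_1,P_2}$ against $M_{P_1}$ is precisely what produces the equations $f(\ell)=0$ in the proof of Theorem \ref{PH(rw)}, so replacing $M_{P_1,P_2}$ by your $M_\ell$ would require reworking that matrix argument (your sets, compared against the all-points set, yield $f(\ell)=\sum_{Q\in\ell}f(Q)$ instead). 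One small slip to fix: the axiom ``two points are incident with at most one line'' is condition (3) of Definition \ref{defconfiguration}, not of Definition \ref{defconfigurationpara}, and the dual fact you use for $M_\ell$ is condition (4) there.
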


\begin{proof}
Let $P$ be a fixed point in $(v_r, b_k)$. We consider the set, $M_P$, of vertices of $Levi_{(v_r,b_k)}$ given by $P$ and all the lines not incident to $P$.  This is an independent set of $Levi_{(v_r,b_k)}$ because there is no incidence between vertices in the set.  Moreover, note that if we included another point-vertex to $M_P$, then that vertex would be adjacent to one of the line-vertices in $M_P$ (because of condition (2) in Definition \ref{defconfiguration}, and the fact that $r>2$). Also, if another line-vertex where to be added to $M_P$, then this line would have to be incident with $P$. It follows that $M_P$ is a maximal independent set of  $Levi_{(v_r,b_k)}$.
\begin{figure}[H]
\centering
\includegraphics[width=0.35\textwidth, angle=-90]{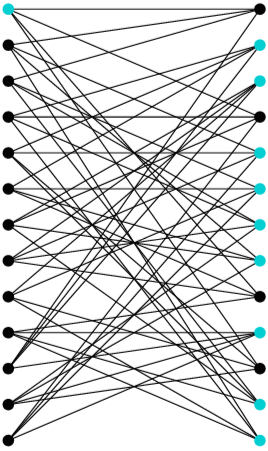}
\caption{A maximal independent set $M_P$ in  $Levi_{(13_4)}$}
\label{fig:Levi1}
\end{figure}
Repeating this construction for all $v$ points in $(v_r,b_k)$, we get $v$ distinct maximal independent sets of $Levi_{(v_r,b_k)}$.

We will now construct another $b$ distinct maximal independent sets of $Levi_{(v_r,b_k)}$. We start by fixing a line $\ell$ in $(v_r,b_k)$ and then any two distinct points $P_1,P_2\in \ell$ (recall that $k \geq 2$). We consider the set, $M_{P_1,P_2}$ of vertices of $Levi_{(v_r,b_k)}$ given by $P_1,P_2$ and all the lines not incident to either of these points.   Note that this forms an independent set since adjacency in $Levi_{(v_r,b_k)}$ only occurs if incidence occurs in $(v_r, b_k)$.  If we try to add in another vertex-point to $M_{P_1,P_2}$, since $r> 2$, then this point will be incident to one of the lines not through $P_1$ or $P_2$ and will therefore be adjacent to the vertex-lines in $M_{P_1,P_2}$. If we try to add another vertex-line to $M_{P_1,P_2}$, then this line will be incident to one or both $P_1$ and $P_2$. Therefore, $M_{P_1,P_2}$ is a maximal  independent of $Levi_{(v_r,b_k)}$. 
\begin{figure}[H]
\centering
\includegraphics[width=0.35\textwidth, angle=-90]{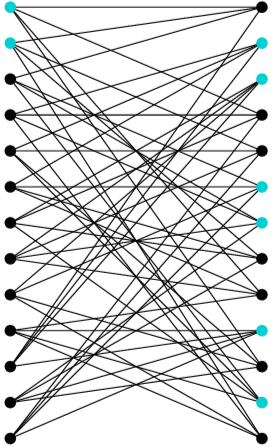}
\caption{A maximal independent set $M_{P_1,P_2}$ in $Levi_{(13_4)}$}
\label{fig:Levi2}
\end{figure}
Repeating this construction for all $b$ lines in $(v_r,b_k)$ (it does not matter what pair of points one picks on any given line), we get $b$ distinct maximal independent sets of $Levi_{(v_r,b_k)}$.

Finally, note that the set of all point-vertices in $Levi_{(v_r,b_k)}$ is a maximal independent set as well as the set of all line-vertices in $Levi_{(v_r,b_k)}$. Hence, we have constructed $v+b+2$ distinct maximal independent sets in $Levi_{(v_r,b_k)}$. 
\end{proof}

Next, we proceed to prove our main result.

\begin{proof}[Proof of Theorem \ref{PH(rw)}]
We denote by $\textbf{F}$ the field of scalars of the well-covered space of $G=Levi_{(v_r,b_k)}$, where $r>2$. Let $A_G$ be the associated matrix of $G$, and note that $A_G$ has $v+b$ columns. In order to prove that $A_G$ has $v+b$ linearly independent rows we will consider the $v+b+2$ maximal independent sets in Lemma  \ref{MIS_gq}.

We create the first $v$ rows of $A_G$ by equating the weight of each of the maximal independent sets $M_P$ to the weight of the maximal independent set consisting of all the lines of $G$. After subtracting we obtain $v$ equations of the form
\begin{equation}\label{eqMISM_P}
f(P) -f(\ell_1) -  f(\ell_2) - \cdots - f(\ell_r) =0 
\end{equation}
where each $\ell_i$ is incident with $P$. It follows that, after organizing the columns of $A_G$ by putting point-vertices first and then line-vertices, the `first' $v$ rows of $A_G$ are
\[
\left[
\begin{array}{cc}
I_{v} & - C \\ [0.2em]
\end{array}
\right]
\]
where $C$ is the incidence matrix of $Levi_{(v_r,b_k)}$. 

In order to obtain the next $b$ rows of $A_G$ we will consider maximal independent sets of the form $M_{P,Q}$. For any given line $\ell$ of $(v_r,b_k)$, we choose (any) two points on it. We will denote these two points as $P_1$ and $P_2$. We then consider the maximal independent set $M_{P_1,P_2}$ and equate its weight to the weight of the maximal independent set $M_{P_1}$.  After subtracting we obtain an equation of the form
\begin{equation}\label{eqMISM_ell}
f(P_2) - f(\ell_1) -  f(\ell_2) - \cdots - f(\ell_r) + f(\ell) =0 
\end{equation}
where each $\ell_i$ is incident with $P_2$.

Note that subtracting Equation \ref{eqMISM_P} (with $P=P_2$)  from Equation \ref{eqMISM_ell} yields $f(\ell) =0$.   Since $\ell$ was arbitrary, we get $f(\ell) =0$  for every line in $(v_r,b_k)$. It follows that since subtracting equations is just a different way to describe row operations in $A_G$, we get that the `first' $v+b$ rows of $A_G$ (after a few row operations) are
\[
\left[
\begin{array}{cc}
I_{v} & - C \\ [0.3em]
\mathbf{0} & I_{b}
\end{array}
\right]
\]

Note that addition and subtraction where the only two (row) operations needed to obtain the matrix above. Hence, the first $v+b$ rows of $A_G$ do not change depending on the characteristic of $\textbf{F}$.

Since the determinant of the matrix above is non-zero,  the rank of $A_G$ is maximal, and thus $wcdim(Levi_{(v_r,b_k)})=0$.
\end{proof}

\section{Possible generalizations}

In this section, we study possible generalizations of Theorem \ref{PH(rw)}. This will be done by providing a few results and by introducing objects for which this theorem could be extended to. We begin by proving that Theorem \ref{PH(rw)} cannot be extended to configurations having exactly two lines being incident with every point. This will be done by an example that considers $(v_2)$ configurations.

We first notice that a $(v_2)$ configuration is a disjoint union of polygons/cycles. This is convenient because disjoint unions of graphs behave well with respect to the well-covered dimension. In fact, Lemma 5 in \cite{BN} says
\[
wcdim(G\cup H) =wcdim(G) + wcdim(H),
\]
where $\cup$ stands for disjoint union.

Since we know that $Levi_{C_n}=C_{2n}$, we get  the following lemma.

\begin{lemma}\label{lemevenwcdim}
Let $\mathcal{C}$ be a $(v_2)$ configuration. Then, 
\[
\mathcal{C} = \bigcup_{i=1}^t C_{n_i},
\]
where $n_i>2$, for all $1\leq i \leq t$. Moreover,
\[
wcdim(Levi_{\mathcal{C}}) = \sum_{i=1}^t wcdim(C_{2n_i})
\]
\end{lemma}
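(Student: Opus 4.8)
The plan is to prove the two claims of Lemma \ref{lemevenwcdim} separately, using the geometric structure of a $(v_2)$ configuration together with the disjoint-union formula already recalled from \cite{BN}. The first claim is a purely structural statement about $(v_2)$ configurations, while the second is an immediate application of the $Levi_{C_n} = C_{2n}$ identity combined with the additivity of the well-covered dimension over disjoint unions.

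First I would establish the decomposition $\mathcal{C} = \bigcup_{i=1}^t C_{n_i}$. Since $\mathcal{C}$ is a $(v_2)$ configuration, every point lies on exactly $r=2$ lines and every line passes through exactly $k=2$ points. The key observation is that this makes $\mathcal{C}$ a $2$-regular structure in both directions: I would argue that the incidence pattern forces $\mathcal{C}$ to be a disjoint union of closed polygonal cycles. Concretely, starting from any point $P_0$, one of its two lines leads to a second point, whose other line leads to a third point, and so on; because there are finitely many points and the configuration conditions (Definition \ref{defconfiguration}) forbid repeated incidences, this alternating point-line walk must eventually close up into a cycle, yielding one polygon $C_{n_i}$. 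Removing that component and repeating partitions all of $\mathcal{C}$ into such cycles. To see that each $n_i > 2$, I would invoke the configuration axioms directly: a ``digon'' $C_2$ would require two distinct lines sharing two distinct points (violating condition (4) that two lines meet in at most one point), or two points joined by two lines, so the smallest admissible polygon has $n_i \geq 3$.

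The second claim then follows cleanly. Applying the Levi-graph functor to the disjoint union commutes with taking disjoint unions, so $Levi_{\mathcal{C}} = \bigcup_{i=1}^t Levi_{C_{n_i}}$. Using the stated fact $Levi_{C_n} = C_{2n}$, this becomes $Levi_{\mathcal{C}} = \bigcup_{i=1}^t C_{2n_i}$. Finally I would apply the additivity formula from Lemma 5 of \cite{BN}, namely $wcdim(G \cup H) = wcdim(G) + wcdim(H)$, inductively across the $t$ components to obtain
\[
wcdim(Levi_{\mathcal{C}}) = \sum_{i=1}^t wcdim(C_{2n_i}),
\]
which is exactly the desired equality.

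I expect the main obstacle to be the first claim rather than the second: making rigorous the intuitive ``follow the edges around'' argument that a $(v_2)$ configuration decomposes into disjoint polygons, and carefully verifying that the configuration axioms both force the walk to close into a genuine cycle and rule out the degenerate cases $n_i = 1$ and $n_i = 2$. The second claim, by contrast, is essentially a one-line consequence of functoriality of the Levi construction and the additivity result already in hand, so the only care needed there is to note that disjoint unions of configurations correspond to disjoint unions of their Levi graphs.
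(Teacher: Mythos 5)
Your proposal is correct and follows exactly the route the paper takes: the paper treats this lemma as an immediate consequence of the observations that a $(v_2)$ configuration decomposes into disjoint cycles $C_{n_i}$ (with $n_i>2$ forced by the configuration axioms), that $Levi_{C_n}=C_{2n}$, and that $wcdim$ is additive over disjoint unions by Lemma 5 of \cite{BN}. The only difference is that you spell out the alternating point-line walk argument and the exclusion of digons, details the paper leaves implicit.
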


Finally, we notice that Theorem 5 in \cite{BKMUV} implies
\[
wcdim(C_{2n})=
\left\{ \begin{array}{cl}
2 & $if$ \ n=3 \\  
0  & $if$ \ n\geq 4
\end{array}
\right.
\]

Next is an immediate corollary of Theorem 5 in \cite{BKMUV} and Lemma \ref{lemevenwcdim}.

\begin{corollary}
$wcdim(Levi_{\mathcal{C}})$ is even, for all $(v_2)$ configurations $\mathcal{C}$. \\
Moreover, for every $n\in \mathbb{N}$, there is a $(v_2)$ configuration, $\mathcal{C}_n$, such that 
\[
wcdim(Levi_{\mathcal{C}_n}) =  2n
\]
In particular, the sequence $\{wcdim(Levi_{\mathcal{C}_n})\}_{n=1}^{\infty}$ is unbounded.
\end{corollary}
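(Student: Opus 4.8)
The plan is to read off both claims from Lemma~\ref{lemevenwcdim} combined with the displayed evaluation of $wcdim(C_{2n})$ quoted from Theorem 5 in \cite{BKMUV}; no new machinery is needed.

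For the first (evenness) assertion, I would start with an arbitrary $(v_2)$ configuration $\mathcal{C}$ and invoke Lemma~\ref{lemevenwcdim} to write $\mathcal{C}=\bigcup_{i=1}^{t}C_{n_i}$ with every $n_i>2$ and $wcdim(Levi_{\mathcal{C}})=\sum_{i=1}^{t}wcdim(C_{2n_i})$. Because each $n_i\geq 3$, the cited evaluation gives $wcdim(C_{2n_i})=2$ exactly when $n_i=3$ and $wcdim(C_{2n_i})=0$ otherwise. Hence $wcdim(Levi_{\mathcal{C}})=2m$, where $m$ is the number of components of $\mathcal{C}$ that are triangles, and this is even.

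For the second (unboundedness) assertion I would produce the witnesses explicitly. Given $n\in\mathbb{N}$, let $\mathcal{C}_n$ be the disjoint union of $n$ triangles. I would first check that this is genuinely a $(v_2)$ configuration: each triangle contributes three points and three lines, every point lies on exactly two lines, and every line passes through exactly two points, so $\mathcal{C}_n$ is a $(3n_2)$ configuration. (If one wishes to respect the lower bounds $v,b\geq 4$ of Definition~\ref{defconfigurationpara} in the smallest case $n=1$, one may instead take $\mathcal{C}_1=C_3\cup C_4$; since $wcdim(C_8)=0$ this does not alter the computed value.) Applying Lemma~\ref{lemevenwcdim} and the evaluation then yields $wcdim(Levi_{\mathcal{C}_n})=\sum_{i=1}^{n}wcdim(C_6)=2n$, and since $2n\to\infty$ the sequence is unbounded.

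The argument is essentially immediate once the lemma and the cycle formula are available, so there is no serious obstacle; the only points demanding a little care are verifying that the proposed $\mathcal{C}_n$ satisfy all four conditions of Definition~\ref{defconfigurationpara}, and confirming that the triangle is the \emph{only} cycle whose Levi graph contributes a nonzero summand. I would also note that the quoted value of $wcdim(C_{2n})$ is stated without reference to $char(\mathbf{F})$, so one should double-check in \cite{BKMUV} that $wcdim(C_6)=2$ holds over every field; if it did depend on the characteristic, the evenness conclusion and the target value $2n$ would have to be qualified accordingly.
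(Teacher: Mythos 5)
Your proposal is correct and follows exactly the route the paper intends: the paper presents this as an ``immediate corollary'' of Lemma~\ref{lemevenwcdim} and the quoted evaluation of $wcdim(C_{2n})$, with disjoint unions of triangles as the implicit witnesses, which is precisely your argument. Your two extra checks --- replacing $\mathcal{C}_1$ by $C_3\cup C_4$ to honor the $v,b\geq 4$ requirement of Definition~\ref{defconfigurationpara}, and flagging that $wcdim(C_6)=2$ should hold over every field --- are careful refinements of, not departures from, the paper's reasoning.
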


We conclude that Theorem \ref{PH(rw)} cannot be expanded to the case $r=2$. However, it is still an open problem to find the well-covered  dimension of all Levi graphs of $(v_2,b_k)$ configurations.

Of course, the study of the well-covered dimension of Levi graphs of configurations not of the form $(v_r, b_k)$ is also an interesting open problem.

\vspace{.1in}

Block designs are another family of objects that could be studied to attempt a generalization of Theorem  \ref{PH(rw)}.

\begin{definition}
Let $\lambda, t\geq 1$. A $t-(v, k, \lambda)$ design (or $t$-design), is an incidence structure of points and blocks with the following properties:
\begin{enumerate}
\item there are $v$ points;
\item each block is incident with $k$ points;
\item any $t$ points are incident with $\lambda$ common blocks.
\end{enumerate}
\end{definition}

It is easy to see that a $1-(v, k, \lambda)$ design is a $(v_{\lambda},b_k)$ configuration, where $b=v\lambda /k$. Moreover, a $2-(v, k, 1)$ design is a configuration in which every pair of points are `collinear.' For $t>1$ and $\lambda >1$, the obvious definition of the Levi graph of a $t$-design would yield a multigraph. This apparent setback is not so much of a problem since having one edge or multiple edges between two vertices would mean the same when looking for maximal independent sets. We claim that the ideas used to prove Theorem \ref{PH(rw)} can be generalized to be applicable to block designs.

\vspace{.1in}

Finally, in this work, we study the well-covered space of the Levi graph of any given configuration. We propose, as an interesting open problem, the study of configurations via understanding the well-covered spaces of their collinearity graphs (in which points in a configuration are defined as vertices, and adjacency occurs if and only if the points are collinear). The third author is currently working on a particular case of this problem: generalized quadrangles.


\end{document}